\newtheorem{theorem}{\bf Theorem}[section]
\newtheorem{definition}[theorem]{\bf Definition}
\newtheorem{lemma}[theorem]{\bf Lemma}
\newtheorem{remark}[theorem]{\bf Remark}
\newtheorem{example}[theorem]{\bf Example}
\DeclareMathOperator{\id}{id}
\DeclareMathOperator{\Ker}{\mathcal{N}}
\DeclareMathOperator{\Ran}{\mathcal{R}}
\newcommand{\A}{\ensuremath{\mathcal{A}}}
\newcommand{\R}{\ensuremath{\mathcal{R}}}
\def\bmatrix{\left[\begin{array}}
\def\endbmatrix{\end{array}\right]}
\def\d{\begin{definition}}
\def\ed{\end{definition}}
\def\t{\begin{theorem}}
\def\et{\end{theorem}}
\def\p{\noindent{\bf Proof.  }}
\title{Algebraic proof methods for identities of matrices and operators: improvements of Hartwig's triple reverse order law}
\author{Dragana S. Cvetkovi\'c-Ili\'c\textsuperscript{1}, Clemens Hofstadler\textsuperscript{2}, Jamal Hossein Poor\textsuperscript{2},\and Jovana Milo\v{s}evi\'c\textsuperscript{1}, Clemens G. Raab\textsuperscript{2}, and Georg Regensburger\textsuperscript{2}}
\date{\small{
\textsuperscript{1}Department of Mathematics, Faculty of Sciences and Mathematics,  \\ University of Ni\v{s}, Serbia\\
\textsuperscript{2}Institute for Algebra, Johannes Kepler University Linz, Austria}}
\begin{document}
\maketitle

\begin{abstract}
When improving results about generalized inverses, the aim often is to do this in the most general setting possible by eliminating superfluous assumptions and by simplifying some of the conditions in statements.
In this paper, we use Hartwig's well-known triple reverse order law as an example for showing how this can be done using a recent framework for algebraic proofs and the software package \texttt{OperatorGB}.
Our improvements of Hartwig's result are proven in rings with involution and we discuss computer-assisted proofs that show these results in other settings based on the framework and a single computation with noncommutative polynomials.
\end{abstract}

\textbf{Keywords}: matrices and linear operators, algebraic operator identities, generalized inverses, reverse order law, automated proofs, noncommutative polynomials, quiver representations

\textbf{MSC 2020}: 15A09, 68V15, 03B35 (Primary); 16B50, 16G20 (Secondary)

\section{Introduction}

Introducing generalized inverses and developing tools working with them in the case when ordinary inverses do not exist, resulted in a lot of progress in several branches of mathematics and many other fields outside of mathematics (mechanics, robotics, control theory, automation, etc.).
The importance and usefulness of this area of research is demonstrated by various open problems that have been solved using the theory of generalized inverses and by many published results.
However, a lot of recently published results for generalized inverses and their applications were proved only under restrictive assumptions which limit their applications to certain very particular cases.
One reason for that is that, in contrast to the setting of matrices, generalized inverses are not defined for each element of more general settings considered (algebras of operators, $C^*$-algebras, rings, \ldots).
In order to benefit from the rich theory of generalized inverses and many already developed useful techniques, researchers usually impose existence of generalized inverses when proving statements.
This leads to many results with redundant instances of assuming regularity of certain elements which makes them less applicable.

The basic example for unnecessary regularity assumptions is the matrix equation $AXB=C$, which was one of the first applications of the later called Moore-Penrose inverse that was introduced by Moore and Penrose independently.
Its solvability and the general solution were considered by Penrose in 1955 \cite{Penrose} in the same paper in which he introduced the four Penrose equations.
Since this result is almost algebraic, it was very easy to generalize it for example to the case of operator equations $AXB=C$ but under the additional assumptions of the closedness of the ranges of the bounded linear operators $A$ and $B$ (that is equivalent with the existence of their Moore-Penrose inverses for operators on Hilbert spaces).
Solvability of this equation in the general case was only considered several years ago, see \cite{Arias}, but many other problems, such as, for example, the existence of a positive solution of that same equation, are still open in the general case.
In fact, there are a lot of problems like this where we have an answer only in some particular cases.
So, in the recent years a lot of effort has been made to widen the range of applicability of these results by considering more general cases of the problems without imposing any additional assumptions.
This paper is exactly one such important step in generalizing Hartwig's triple reverse order law.

In this paper, we present several significant improvements of Hartwig's triple reverse order law motivated by using the software package \texttt{OperatorGB} \cite{HofstadlerRaabRegensburger2019}, which is based on \cite{RaabRegensburgerHosseinPoor2019,Hofstadler2020}.
The aim is to prove statements in an abstract setting in such a way that analogous statements in various concrete settings (e.g.\ for matrices, linear bounded operators, $C^*$-algebras, \ldots) can easily be proven in a rigorous way, but without inspecting every step of the proof of the abstract statement.
To this end, we employ a recent framework that allows to produce rigorous proofs for several different concrete settings by translating a single statement about abstract noncommutative polynomials.
This framework was developed in \cite{RaabRegensburgerHosseinPoor2019} and the software package \texttt{OperatorGB} provides extensive computer support for doing the computations needed.
In particular, the software provides explicit certificates of identities, which can be checked independently.
Moreover, the software can also be used to explore variations of given statements.
That is what initiated the improvements of Hartwig's triple reverse order law presented in this paper.
Based on the results obtained by this software we give a hand proof in the setting of rings which hopefully provides motivation for further research with the same idea.
In addition, we explain how computer-assisted proofs of all these improvements can be done and we provide a \textsc{Mathematica} notebook containing all these automated proofs at
	\url{http://gregensburger.com/softw/OperatorGB}.
These improvements are the first new results that are obtained by applying the framework and software.
From this website also a \textsc{Mathematica} as well as a \textsc{SageMath} version of the \texttt{OperatorGB} package can be obtained.

The main setting that we consider in this paper is a ring  $\R$ with a unit $1\neq 0$ and an involution $a
\mapsto a^*$ satisfying 
$$
 (a^*)^* = a, \quad \quad (a + b)^* = a^* + b^*, \quad\quad (ab)^* = b^*a^*.
$$

\begin{definition}
We say that $a\in\R$ is \emph{Moore-Penrose invertible (or MP-invertible)}, if there exists
$b\in\R$ such that the following hold:
  \begin{equation} \label{eq-more}
aba=a,\ bab=b,\ (ab)^*=ab,\
  (ba)^*=ba.\end{equation}
An element  $b$ that satisfies \eqref{eq-more} is called a {\em Moore-Penrose
inverse\/} of $a$.
\end{definition}

It is well known that the Moore-Penrose inverse is unique
when it exists. We denote the Moore-Penrose inverse of $a$ by $a^\dag$.
We point out
some properties of the Moore-Penrose inverse that follow from the definition.
Clearly,  $a$ is MP-invertible if and only if $a^*$ is MP-invertible; in
this case
\[     (a^*)^\dag = (a^\dag)^*.\]
If $a$ is MP-invertible, then so are $a^*a$ and $aa^*$, with
\[  (a^*a)^\dag = a^\dag (a^*)^\dag,\quad (aa^*)^\dag = (a^*)^\dag a^\dag.\]

\begin{definition}
An element $a\in\R$ is {\em left $*$-cancellable} if, for all $z\in\R$,
$a^*az = 0$ implies $az=0$,  it is {\em right
$*$-cancellable} if, for all $z\in\R$, $zaa^*=0$ implies $za=0$, and {\em
$*$-cancellable} if it is both left and right cancellable.
\end{definition}

We observe that
$a$ is left $*$-cancellable if and only if $a^*$ is right $*$-cancellable.
In a $C^*$-algebra, every element is $*$-cancellable:  If
$a^*az=0$, then $(az)^*az= 0$ which implies $az=0$; similarly $zaa^* = 0$ implies $za = 0$.

If $b\in\R$ satisfies $\{i,\ldots,j\}$ of the Penrose equations from $(\ref{eq-more})$ we say that $b$ is a $\{i,\ldots,j\}$-inverse of $a$. The set of all $\{i,\ldots,j\}$-inverses of $a$ is denoted by $a\{i,\ldots,j\}$. Evidently $a\{1,2,3,4\}=\{a^\dagger\}$.  We say that an element $a\in\R$ is regular if $a\{1\}\neq\emptyset$. In general, in $C^*$-algebras we have that the regularity property is equivalent with MP-invertibility. In particular, in an algebra of bounded linear operators the regularity of an arbitrary operator $A$ is equivalent to the closedness of the range of $A$ while in a ring with involution MP-invertibility of $m$ is equivalent to the right $*$-cancellability of $m$ and group invertibility of $mm^*$ (see Theorem 8.25 from \cite{Rao} or Theorem 5.3 from \cite{KP}).
\begin{definition}
An element $a\in\R$ is \emph{EP} if $a\R=a^*\R$.
\end{definition}

In the following subsection, we give a self-contained informal overview of the framework for algebraic proofs and of the software package \texttt{OperatorGB}.
In Section~\ref{sec:Hartwig}, we first discuss Hartwig's triple reverse order law and related results from the literature. Then, we give hand proofs of several improvements of it in rings with involution.
After that, in Section~\ref{sec:computerproof}, we discuss how these results can be proven with the help of the computer in such a way that the framework yields rigorous proofs for these statements also in the context of matrices and operators. 
Formal definitions and statements about the framework for algebraic proofs, which is used by the software \texttt{OperatorGB}, are summarized in the appendix.

\subsection{Introduction to the framework for algebraic proofs}\label{sec:introductionframework}

The advantage of the framework presented below is that a single computation in an abstract setting proves analogous statements in various concrete settings (e.g.\ for matrices, linear bounded operators, $C^*$-algebras, \ldots) without having to inspect every step of the abstract computation.
Just like in any ring, computations with noncommutative polynomials allow any two elements to be added or multiplied.
Therefore, it is not clear a priori that a given proof of a statement in a ring is valid also for rectangular matrices or operators with domains and codomains.
Using the framework for algebraic proofs, the following steps have to be carried out once in a suitable ring of noncommutative polynomials.
Then, to rigorously prove a statement for various concrete settings, based on Theorem~\ref{thm:RlinearUnique}, it suffices to check that the polynomials corresponding to the assumptions and claims are compatible with different domains and codomains of operators.
\begin{enumerate}
 \item Express all assumptions and claimed properties as identities in terms of operators.
 \item Take the differences of the left and right hand sides of these identities and replace the individual operators uniformly by noncommutative indeterminates in order to convert the identities into polynomials.
 \item Find a concrete representation of the polynomials corresponding to the claim as a two-sided linear combination of polynomials corresponding to the assumptions, where coefficients are polynomials.
\end{enumerate}
Representations of polynomials as mentioned in the last step are called \emph{cofactor representations} and serve as certificates for ideal membership that can be checked independently of how they were found. 
However, finding them is a hard problem, since for noncommutative polynomials ideal membership is undecidable in general, see e.g.\ \cite{Mora1994}.
In practice, cofactor representations often can be found by computing a (partial) Gr\"obner basis, see \cite{Hofstadler2020} and references therein.
Already in the pioneering work \cite{HeltonWavrik1994,HeltonStankusWavrik1998} Gr\"obner bases have been used to simplify matrix identities in linear systems theory.
Proving operator identities using Gr\"obner basis computations and related questions are also addressed in \cite{LevandovskyySchmitz2020}.

The software package \texttt{OperatorGB} provides the command \texttt{Certify}, which not only tries to compute cofactor representations but also does the compatibility checks of assumptions and claims.
Inspecting the explicit cofactor representations found by the software can also give hints how assumptions could be relaxed by dropping the assumptions that do not appear in the cofactor representations.
More generally, the software makes it easy to experiment with different sets of assumptions for proving a desired claim.
Improvements of Hartwig's triple reverse order law found by such experiments were the basis for the results presented in the next section.
For details on how our framework and software are used to find and prove these results, see Section~\ref{sec:computerproof}.

Next, we illustrate the approach with a simple statement about inner inverses of matrices, for details of the framework see the appendix.
In \cite[Thm.~2.3]{Werner1994}, Werner proved among other things the following statement about inner inverses of complex matrices. If $A$ and $B$ are complex matrices such that $AB$ exists, then 
$\Ker(A)\subseteq\Ran(B)$ implies that $B\{1\}A\{1\} \subseteq (AB)\{1\}$.
As a first step, we have to phrase all properties stated in the assumptions and in the claim in terms of identities of matrices, which results in the following statement. For any complex matrices $A^-,B^-$ with
\begin{equation}
\label{eq:WernerInnerInverses}
 AA^-A=A \quad\text{and}\quad BB^-B=B,
\end{equation}
we have that
\begin{equation}
\label{eq:WernerAssumption}
 BB^-(I-A^-A)=I-A^-A
\end{equation}
implies
\begin{equation}
\label{eq:WernerClaim}
 ABB^-A^-AB=AB.
\end{equation}
The formats of these matrices can be visualized by the following diagram.
\begin{center}
  \begin{tikzpicture}
    \matrix (m) [matrix of math nodes, column sep=2cm]
     {\mathbb{C}^m & \mathbb{C}^n & \mathbb{C}^k\\};
    \path[->] (m-1-1) edge [bend left] node [auto] {$A^{-}$} (m-1-2);
    \path[->] (m-1-2) edge [bend left] node [auto] {$A$} (m-1-1);
    \path[->] (m-1-2) edge [loop above] node [auto] {$I$} (m-1-2);
    \path[->] (m-1-2) edge [bend left] node [auto] {$B^{-}$} (m-1-3);
    \path[->] (m-1-3) edge [bend left] node [auto] {$B$} (m-1-2);
  \end{tikzpicture}
\end{center}
Secondly, we represent these identities by noncommutative polynomials in the indeterminates $\{a,a^-,b,b^-,i\}$. This is done by uniformly replacing each matrix (including the identity matrix) by an indeterminate and forming the difference of the left and right hand side of each identity.
\begin{gather}
f_1 = aa^-a-a \qquad f_2 = bb^-b-b \qquad f_3 = bb^-(i-a^-a)-i+a^-a\\
f = abb^-a^-ab-ab\label{eq:WernerClaimPoly}
\end{gather}
Moreover, for correctly handling the identity matrix, we also need to represent its algebraic identities in terms of polynomials.
\begin{equation}
f_4 = ai-a \quad f_5 = ia^--a^- \quad f_6 = ib-b \quad f_7 = b^-i-b^- \quad f_8 = i^2-i
\end{equation}
Finally, either by hand or with the help of software, we can express the polynomial $f$ representing the claim in terms of the polynomials $f_1,\dots,f_8$ representing the assumptions.
\begin{align}
 f &= f_1b+af_2-af_3b+(abb^--a)f_6\label{eq:WernerCofactor1}
\end{align}
By Theorem~\ref{thm:RlinearUnique}, it follows from \eqref{eq:WernerCofactor1} that \eqref{eq:WernerClaim} holds for any matrices $A,B$ with inner inverses $A^-,B^-$ satisfying \eqref{eq:WernerAssumption}, see Lemma~\ref{lem:Werner1} in the appendix.
Moreover, based on the theorem, the cofactor representation \eqref{eq:WernerCofactor1} also proves the analogous statement for bounded linear operators $A,B$ between Hilbert spaces $U,V,W$ as in the following diagram.
\begin{center}
\begin{tikzpicture}
 \matrix (m) [matrix of math nodes, column sep=2cm]
  {W & V & U\\};
 \path[->] (m-1-1) edge [bend left] node [auto] {$A^{-}$} (m-1-2);
 \path[->] (m-1-2) edge [bend left] node [auto] {$A$} (m-1-1);
 \path[->] (m-1-2) edge [loop above] node [auto] {$\id$} (m-1-2);
 \path[->] (m-1-2) edge [bend left] node [auto] {$B^{-}$} (m-1-3);
 \path[->] (m-1-3) edge [bend left] node [auto] {$B$} (m-1-2);
\end{tikzpicture}
\end{center}

As mentioned above, explicit cofactor representations not only certify ideal membership, but can also give hints how assumptions could be relaxed.
In particular, they also allow to analyze which assumptions can be relaxed for proving a given identity of operators. For example, \eqref{eq:WernerCofactor1} does not involve $f_4,f_5,f_7,f_8$, so in $BB^-(I-A^-A)=I-A^-A$ we could replace the identity matrix $I$ by any other matrix $J$ satisfying $JB=B$.
Trivially, any cofactor representation with polynomials having only integer coefficients, as in \eqref{eq:WernerCofactor1} above, also holds in any ring, and hence proves an analogous statement for rings.

As discussed before, to apply the proof framework directly, one has to translate all properties of the operators involved into identities.
In the context of generalized inverses, such properties are often conditions on ranges and kernels of some basic operators.
If a projection (idempotent) on these spaces can be expressed in terms of basic operators, the translation to identities is immediate, as illustrated in the example above.
Inclusion of ranges $\Ran(A)\subseteq \Ran(B)$ can be translated in many situations to the existence of a factorization $A=BC$ for some operator $C$.
In Hilbert or Banach spaces, this is the well-known factorization property in Douglas' lemma. For proving the existence of such a linear operator $C$ without any additional properties, one just needs operators defined on a vector space over an arbitrary field.
This principle will play a prominent role in Section~\ref{sec:computerproof}.

\section{Improvements of Hartwig's triple reverse order law}
\label{sec:Hartwig}

The {\lq\lq}reverse order law{\rq\rq} problem was originally posed by
Greville \cite{G} as early as in the $1960$'s, who first considered it in
the case of the Moore-Penrose inverse of the product of two matrices. Namely, for
given matrices $A,B$ such that $AB$ is defined  the following was
proved:
\begin{eqnarray}\label{G1}
(AB)^\dagger=B^\dagger A^\dagger \Leftrightarrow \R(A^*AB)\subseteq \R(B),\ \R(BB^*A^*)\subseteq\R(A^*).
\end{eqnarray}

This was followed by further research on this subject branching in several directions:
\begin{itemize}
\item[-] for products of more than two matrices,
\item[-] for different classes of generalized inverses ($\{1\}$, $\{1,3\}$, $\{1,2,3\}$, etc.), and
\item[-] in different settings (operator algebras, $C^*$-algebras, rings, etc.).
\end{itemize}

\smallskip

For more information on this subject please see
\cite{B4,DW,1,2,3,4,5,6,7,8, PW, P1, LY,LY1, Liu2, J1, SSS,T,
Wang2}.

\smallskip

One of the first to be inspired by Greville's result $(\ref{G1})$ was Hartwig \cite{H}, who studied the reverse order law for the Moore-Penrose inverse of the product of three matrices. Indeed, he considered necessary and sufficient conditions such that
\begin{eqnarray}\label{e1}
(ABC)^{\dagger}=C^\dagger B^\dagger A^\dagger
\end{eqnarray}
holds.

\t $\cite{H}$ \label{t0} Let $A,B,C$ be complex matrices such that $ABC$ is defined and let $ P=A^\dag ABCC^\dag$, $Q=CC^\dag B^\dag A^\dag A$. The following conditions are
equivalent:
\begin{itemize}
\item [$(i)$]  $(ABC)^{\dagger}=C^\dagger B^\dagger A^\dagger;$

\item [$(ii)$]  $Q\in P\{1,2\}$ and both of $A^*APQ$ and $QPCC^*$ are
Hermitian;

\item [$(iii)$]  $Q\in P\{1,2\}$ and both of $A^*APQ$ and $QPCC^*$ are
EP;

\item [$(iv)$]  $Q\in P\{1\},$ $\R(A^*AP)=\R(Q^*)$ and $\R(CC^*P^*)=\R(Q)$;

\item [$(v)$]  $PQ=(PQ)^2,$ $\R(A^*AP)=\R(Q^*)$ and $\R(CC^*P^*)=\R(Q)$.
\end{itemize}
\et

This inspired many authors to continue research in these directions and it was precisely Hartwig's result that motivated further consideration of the reverse order law for MP-inverses in the case of three elements in certain other settings such as in the algebra of bonded linear operators and in $C^*$-algebras, which was done in \cite{ND} and \cite{JM}, respectively.
In both papers, results analogous to Hartwig's paper were obtained, but with the additional conditions of  regularity of all three elements and their products.
Here, we mention a result presented in \cite{JM} for the case of $C^*$-algebras in order to give a clear picture of the conditions assumed and the equivalences obtained (in the case of bounded linear operators between Hilbert spaces the theorem looks identically).

  \t \label{t1} $\cite{JM}$ Let $\A$ be a complex unital $C^*$-algebra and let $a,b,c\in\A$ be such that
$a,b,c$ and $abc$ are regular. Let $p=a^\dagger abcc^\dagger$ and $q=cc^\dagger b^\dagger a^\dagger a$. 
Then, the following conditions are
equivalent:
\begin{itemize}
\item [$(i)$]  $(abc)^{\dagger}=c^\dagger b^\dagger a^\dagger;$

\item [$(ii)$]  $q\in p\{1,2\}$ and both of $a^*apq$ and $qpcc^*$ are
Hermitian;

\item [$(iii)$]  $q\in p\{1,2\}$ and both of $a^*apq$ and $qpcc^*$ are
EP;

\item [$(iv)$]  $q\in p\{1\},$ $a^*ap\A=q^*\A$ and $cc^*p^*\A=q\A$;

\item [$(v)$]  $pq=(pq)^2,$ $a^*ap\A=q^*\A$ and $cc^*p^*\A=q\A$.
\end{itemize}
\et

The main results presented here  represent an important improvement of Hartwig's result in several senses:

\begin{itemize}

\item[$\circ$] We consider the problem in rings with involution, which is a more abstract setting than what was considered in the literature so far.
Together with the framework and the discussion in Section~\ref{sec:computerproof} this generalizes all the results previously mentioned.

\item[$\circ$] We relax conditions
$(iv)$ and $(v)$ in the original result of Hartwig (Theorem \ref{t0}),  by replacing the respective equalities of ranges assumed in both of these conditions with appropriate inclusions of ranges.
For example, we show in Theorems~\ref{t3} and \ref{t4} that certain combinations of inclusions (there are four of them in total), along with the assumption that the element $pq$ is idempotent, imply $(\ref{e1})$, while the other two combinations do not guarantee the claimed conclusion (see Example~\ref{ex:Hartwig5inclusions}).
As for the analogous results for algebras of operators and $C^*$-algebras (see \cite{ND} and \cite{JM}), we improve them in a similar way by replacing equalities with appropriate inclusions.

\item[$\circ$] Compared to the results for algebras of operators and $C^*$-algebras in general (see \cite{ND} and \cite{JM}), we significantly reduce the set of starting assumptions upon which these results are based by dropping certain regularity conditions. Namely, if one is interested in the validity of $(\ref{e1})$, it is possible to omit the requirement that  the product $abc$ is MP-invertible, since this follows directly from some of the assumptions $(iv)$ or $(v)$. In the case of rings, MP-invertibility of the product $abc$ can be replaced with the weaker condition of right $*$-cancellability of $abc$. See Theorems~\ref{t3} and \ref{tnova} and similarly Theorem~\ref{t4}.

\item[$\circ$]  Also, it is possible to generalize the result by showing that $b^\dagger$ can be replaced by an arbitrary element $\widetilde{b}$ as well as that $a^\dagger$ and $c^\dagger$ can be replaced with arbitrary $a^{(1,2,3)}$ and  $c^{(1,2,4)}$, respectively  (see Theorem \ref{t5}).
In this way, the assumption of MP-invertibility of the element $b$  is dropped and the MP-invertibility of the elements $a$ and $c$ is replaced with the existence of $a^{(1,2,3)}$ and  $c^{(1,2,4)}$. This, although the last two are equivalent conditions in  operator algebras and $C^*$-algebras, improves the results significantly in rings with involution since there the existence of a $\{1,2,3\}$-inverse of an element is equivalent with the existence of its $\{1,3\}$-inverse and the latter is a much weaker condition than MP-invertibility (as witnessed by the ring $M_2(\Bbb C)$ with taking transposes as the involution).

\end{itemize}

Recall that $\R$ denotes a ring with a unit $1\neq 0$ and with an involution.

\t \label{t3} Let $a,b,c\in\R$ be such that
 $a,c$ are MP-invertible. Let  $p=a^\dagger abcc^\dagger$ and $q=cc^\dagger \widetilde{b} a^\dagger a$,  for $\widetilde{b}\in\R$.  Then, the following conditions are equivalent:
\begin{itemize}
\item [$(i)$]  $abc$ is Moore-Penrose invertible and $(abc)^{\dagger}=c^\dagger \widetilde{b}a^\dagger$;

\item [$(iv)$]  $q\in p\{1\},$ $a^*ap\R\supseteq q^*\R$ and $cc^*p^*\R\subseteq q\R$;

\item [$(v)$]     $abc$ is right $*$-cancellable, $pq=(pq)^2,$ $a^*ap\R\supseteq q^*\R$ and $cc^*p^*\R\subseteq q\R$;

\item [$(vi)$]  $q\in p\{2\}$,  $a^*ap\R\supseteq q^*\R$ and $cc^*p^*\R\subseteq q\R$.
\end{itemize}
\et

 \p Let $m=abc$ and $ \widetilde{m}=c^\dagger \widetilde{b}
a^\dagger$.  Evidently, $pq$ is idempotent if and only if $m\widetilde{m}$ is idempotent. Also, we have that the following equivalences hold:

\begin{eqnarray*}
&&a^*ap\R\supseteq q^*\R\Leftrightarrow m\R\supseteq(\widetilde{m})^*\R \Leftrightarrow \R m^*\supseteq\R \widetilde{m} \Leftrightarrow \widetilde{m}\in\R m^*;\\
&&cc^*p^*\R\subseteq q\R\Leftrightarrow m^*\R\subseteq \widetilde{m}\R \Leftrightarrow m^*\in \widetilde{m}\R;
\end{eqnarray*}

$(i)\Rightarrow(v)$:  If
$m^{\dagger}=\widetilde{m}$, then clearly $m\widetilde{m}$ is
idempotent. Also,
\begin{eqnarray*}
&&\widetilde{m}=m^\dagger=m^\dagger mm^\dagger=m^\dagger
(m^\dagger)^*m^*\in\R m^*,\\
&&m^*=(mm^\dagger m)^*=m^\dagger mm^*=\widetilde{m} mm^*\in \widetilde{m}\R.
\end{eqnarray*}

$(v)\Rightarrow(i)$: If $(v)$ holds, then there exist $u,v\in\R$ such that
$\widetilde{m}=um^*$ and $m^*=\widetilde{m} v$.
Now, multiplying
$m\widetilde{m}=(m\widetilde{m})^2$  by $v$
from the right side, we get $mm^*=m\widetilde{m} mm^*$ i.e.\
$(1-m\widetilde{m})mm^*=0$, which gives $(1-m\widetilde{m})m=0$ by
right $*$-cancellability of $m$.
So, $\widetilde{m}$ is an inner
inverse of $m$.
Further, we have that
$$
\widetilde{m}=um^*=u(m\widetilde{m}
m)^*=\widetilde{m}(m\widetilde{m})^*,
$$
which implies that $m\widetilde{m}$ is Hermitian and further
$$
\widetilde{m}=\widetilde{m}(m\widetilde{m})^*=\widetilde{m}
m\widetilde{m} .
$$
Also,
$$
m=v^*(\widetilde{m})^*=v^*(\widetilde{m} m\widetilde{m})^*=
m(\widetilde{m} m)^*,
$$
which implies that $\widetilde{m} m$ is Hermitian.

$(iv),(vi)\Rightarrow(v)$: This is evident.

$(i)\Rightarrow(iv)$:  The property $q\in p\{1\}$ follows directly from the fact that $\widetilde{m}$ is an inner inverse of $m$. The rest of the proof follows as in the part $(i)\Rightarrow(v)$.

$(i)\Rightarrow(vi)$:  The property $q\in p\{2\}$ follows from the fact that $\widetilde{m}$ is an outer inverse of $m$. The rest of the proof follows as in the part $(i)\Rightarrow(v)$. $\Box$

\medskip

It is interesting to mention that if we take the reverse inclusion from $(ii)$ of Theorem \ref{t3} (notice that in Hartwig's result we have equality!) and replace in the statement of the theorem the assumption  of  right $*$-cancellability of  $abc$ with the  assumption of left $*$-cancellability of  $c^\dagger \widetilde{b} a^\dagger$, we get the following analogous result.

\t \label{t4} Let  $a,b,c,\widetilde{b}\in\R$ be such that
 $a,c$ are MP-invertible. Let  $p=a^\dagger abcc^\dagger$ and $q=cc^\dagger \widetilde{b} a^\dagger a$. Then, the following conditions
are equivalent:
\begin{itemize}
\item [$(i)$]  $abc$ is Moore-Penrose invertible and $(abc)^{\dagger}=c^{\dagger} \widetilde{b}a^{\dagger}$;

\item [$(iv)$]  $q\in p\{1\}$, $a^*ap\R\subseteq q^*\R$ and $cc^*p^*\R\supseteq q\R$;

\item [$(v)$]  $c^\dagger  \widetilde{b}a^\dagger$ is   left $*$-cancellable, $pq=(pq)^2,$ $a^*ap\R\subseteq q^*\R$ and $cc^*p^*\R\supseteq q\R$;

\item [$(vi)$]  $q\in p\{2\}$, $a^*ap\R\subseteq q^*\R$ and $cc^*p^*\R\supseteq q\R$.

\end{itemize}
\et

The following example illustrates the fact that the remaining two combinations of inclusions in the original result of Hartwig (Theorem \ref{t0} $(v)$) do not necessarily imply $(\ref{e1})$.

\begin{example}
\label{ex:Hartwig5inclusions}
Let
$$
A=\bmatrix{ccc} -3&2&2\\0&0&0\\0&0&0\endbmatrix,\ \ B=\bmatrix{ccc} 1&0&1\\0&1&1\\1&0&0\endbmatrix,\ \  C=\frac 1 3 \bmatrix{ccc} 1&1&1\\1&1&1\\1&1&1\endbmatrix.
$$
Then
$$
A^\dagger=\frac 1 {17}\bmatrix{ccc} -3&0&0\\2&0&0\\2&0&0\endbmatrix,\ \ B^\dagger=\bmatrix{ccc} 0&0&1\\-1&1&1\\1&0&-1\endbmatrix,\ \  C^\dagger=C.
$$
If we define $P$ and $Q$ as in Theorem \ref{t0},  we get that $PQ=0$ is idempotent and $\R(A^*AP)\subseteq \R(Q^*)$ and $\R(CC^*P^*)\subseteq\R(Q)$ but $(ABC)^\dagger\neq C^\dagger B^\dagger A^\dagger$.

If matrices $A,B,C$ are defined as $C^\dagger, B^\dagger$ and $A^\dagger$, respectively, as given above, we conclude that also the second pair of inclusions $\R(Q^*)\subseteq \R(A^*AP)$ and $\R(Q)\subseteq \R(CC^*P^*)$ together with the assumption that the matrix $PQ$ is idempotent fails to imply $(\ref{e1})$.

\end{example}
\medskip

On the other hand, the above mentioned pairs of inclusions imply  $(\ref{e1})$  with some assumptions on $p$ and $q$.

\t \label{tnova} Let $a,b,c\in\R$ be such that $a,c$ are
MP-invertible. Let  $p=a^\dagger abcc^\dagger$ and $q=cc^\dagger
\widetilde{b} a^\dagger a$,  for $\widetilde{b}\in\R$.  Then, the
following conditions are equivalent:
\begin{itemize}
\item [$(i)$]  $abc$ is Moore-Penrose invertible and $(abc)^{\dagger}=c^\dagger \widetilde{b}
a^\dagger$;

\item [$(iv)$]  $q\in p\{1\},$ $a^*ap\R\supseteq q^*\R$ and $cc^*p^*\R\supseteq q\R$;

\item [$(vi)$]  $q\in p\{2\}$,  $a^*ap\R\subseteq q^*\R$ and $cc^*p^*\R\subseteq q\R$.
\end{itemize}
\et

In addition to the previously mentioned results, we can show that
MP-invertibility of the elements $a$ and $c$ can be  replaced with
the existence of $a^{(1,2,3)}$ and  $c^{(1,2,4)}$.

\t \label{t5} Let  $a,b,c,\widetilde{b}\in\R$ be such that there exist
$a^{(1,3)}$ and $c^{(1,4)}$ and such that $abc$ is right $*$-cancellable.
Let $a^{(1,2,3)}, c^{(1,2,4)}$ be given such that $c^{(1,2,4)}
\widetilde{b} a^{(1,2,3)}$ is left \mbox{$*$-cancellable} and let
$p=a^{(1,2,3)}abcc^{(1,2,4)}$ and $q=cc^{(1,2,4)} \widetilde{b}
a^{(1,2,3)}a$. Then, the following conditions are equivalent:
\begin{itemize}
\item [$(i)$]  $abc$ is Moore-Penrose invertible and $(abc)^{\dagger}=c^{(1,2,4)} \widetilde{b}
a^{(1,2,3)}$;

\item [$(ii)$]  $q\in p\{1,2\}$ and both of $a^*apq$ and $qpcc^*$ are
Hermitian;

\item [$(iii)$]  $q\in p\{1,2\}$ and both of $a^*apq$ and $qpcc^*$ are EP;

\item [$(iv)$]  $pq=(pq)^2,$ $a^*ap\R\supseteq q^*\R$ and $cc^*p^*\R\subseteq q\R$;

\item [$(v)$]  $pq=(pq)^2,$ $a^*ap\R\subseteq q^*\R$ and $cc^*p^*\R\supseteq q\R$.
\end{itemize}

\et

Notice that, if in Theorem \ref{t5} we replace $a^{(1,2,3)}$ and $c^{(1,2,4)}$ with $a^{(1,3)}$ and $c^{(1,4)}$, respectively, the assertion of the theorem does not hold anymore, which will be shown in the next example:

\begin{example} Let  $B=C=\widetilde{B}=I$ and take any matrix $A$ such that $A\{1,3,4\}\neq \{A^\dagger\}$ (such $A$ can be any projection different from the identity). If we take $A^{(1,3)}=A^{(1,3,4)}\neq A^\dagger$ we get that the conditions $(ii)-(v)$  are all satisfied while $(i)$ from Theorem \ref{t5} is not satisfied.
\end{example}

Finally, by the discussion above we end this section with
the improved version of Hartwig's original result for matrices.

\t  \label{t00} Let $A,B,C$ be complex matrices such that $ABC$ is defined and let $ P=A^\dag ABCC^\dag$, $Q=CC^\dag B^\dag A^\dag A$. The following conditions are
equivalent:
\begin{itemize}

\item [$(i)$]  $(ABC)^{\dagger}=C^\dagger B^\dagger A^\dagger;$

\item [$(ii)$]  $Q\in P\{1,2\}$ and both of $A^*APQ$ and $QPCC^*$ are
Hermitian;

\item [$(iii)$]  $Q\in P\{1,2\}$ and both of $A^*APQ$ and $QPCC^*$ are
EP;

\item [$(iv')$]  $Q\in P\{1\}$, $\R(Q^*)\subseteq \R(A^*AP)$ and $\R(CC^*P^*)\subseteq \R(Q)$;

\item [$(iv'')$]  $Q\in P\{1\}$,
 $\R(A^*AP)\subseteq \R(Q^*)$ and $\R(Q)\subseteq \R(CC^*P^*)$;
 
\item [$(iv''')$]  $Q\in P\{1\}$,
 $\R(Q^*)\subseteq \R(A^*AP)$ and $\R(Q)\subseteq \R(CC^*P^*)$;
 
 \item [$(iv'''')$]  $Q\in P\{2\}$, $\R(Q^*)\subseteq \R(A^*AP)$ and $\R(CC^*P^*)\subseteq \R(Q)$;
 
\item [$(iv''''')$]  $Q\in P\{2\},$
 $\R(A^*AP)\subseteq \R(Q^*)$ and $\R(Q)\subseteq \R(CC^*P^*)$;
 
 \item [$(iv'''''')$]  $Q\in P\{2\},$
 $\R(A^*AP)\subseteq \R(Q^*)$ and $\R(CC^*P^*)\subseteq \R(Q)$;
 
\item [$(v')$]  $PQ=(PQ)^2,$ $\R(Q^*)\subseteq \R(A^*AP)$ and $\R(CC^*P^*)\subseteq \R(Q)$;

\item [$(v'')$]  $PQ=(PQ)^2,$ $\R(A^*AP)\subseteq \R(Q^*)$ and $\R(Q)\subseteq \R(CC^*P^*)$.
\end{itemize}
\et

\subsection{Computer-assisted algebraic proofs}
\label{sec:computerproof}

In the following, we discuss different aspects and use cases of the proof framework outlined in Section~\ref{sec:introductionframework}.
We use Hartwig's result and its improvements presented above to exemplify this.
Algebraically, the central point of the proof is membership of the polynomial representing the claimed identity in the ideal generated by the polynomials representing the assumed identities, c.f.\ the third step listed in the introduction.
Below, we also describe how certain assumptions, which are not identities of matrices or operators themselves, can sometimes still be used within the framework.

First, we focus on the implication $(v)\Rightarrow(i)$ in Theorem~\ref{t0}: if $PQPQ=PQ$, $\Ran(A^*AP)=\Ran(Q^*)$, and $\Ran(CC^*P^*)=\Ran(Q)$, then $M^\dag=C^{\dag}B^{\dag}A^\dag$.

Based on Douglas' lemma, we first translate the range conditions to identities of operators. The four inclusions of ranges are equivalent to the following identities for some operators $U_1,U_2,V_1,V_2$.
\begin{equation}
\label{eq:HartwigDouglasFull}
 A^*AP = Q^*V_1 \quad\quad A^*APV_2 = Q^* \quad\quad
 CC^*P^* = QU_1 \quad\quad CC^*P^*U_2 = Q
\end{equation}
For each Moore-Penrose inverse $A^{\dag},B^{\dag},C^{\dag},M^{\dag}$, we have the four defining identities.

Translating these identities into polynomials, we introduce an indeterminate for each basic operator. Moreover, for each indeterminate, we introduce another indeterminate representing the adjoint of the corresponding operator. In total, this amounts to $22$ indeterminates. Similarly, each identity of operators is translated into two polynomials, one for the identity itself and one for its adjoint. Thereby, we obtain a set $F$ of $34$ noncommutative polynomials with integer coefficients representing the assumptions. The claim corresponds to the polynomial $f=m^{\dag}-c^{\dag}b^{\dag}a^{\dag}$.

Then, we use our software to show that $f$ lies in the ideal generated by the polynomials of $F$. The cofactor representation certifying this ideal membership was computed in less than $45$ seconds and has $157$ terms.
The diagram induced by generic domains and codomains of operators has $4$ vertices and one edge for each indeterminate. 
By construction, the polynomial $f$ and the elements of $F$ are compatible with domains and codomains.
By Theorem~\ref{thm:RlinearUnique}, this now rigorously proves that $M^\dag=C^{\dag}B^{\dag}A^\dag$ holds under the conditions given in $(v)$.
Note that this proof only relies on the defining identities of Moore-Penrose inverses and does not use any additional properties or lemmas.
Consequently, the implication $(v)\Rightarrow(i)$ is in fact proven for any setting in which it can be formulated, since the polynomials in the cofactor representation obtained have only integer coefficients.

Using the software, it is easy to experiment with relaxing the assumptions and check if a cofactor representation of $f$ in terms of a subset of $F$ still can be found. For instance, it turns out that the first and last identity in \eqref{eq:HartwigDouglasFull} can be dropped. This corresponds to relaxing the range conditions in $(v)$ to $\Ran(A^*AP)\supseteq\Ran(Q^*)$ and $\Ran(CC^*P^*)\subseteq\Ran(Q)$. Additionally, we could also observe that the cofactor representation of $f$ contains no polynomial 
associated to any of the four defining equations of $B^\dag$. This shows that $B^\dag$ can in fact be replaced by an arbitrary operator $\tilde{B}$ that does not have to be related to $B$ in any way. 

It is also possible to prove the implication $(i) \Rightarrow (v)$ using our framework and software. To this end, first explicit expressions for $U_1,U_2,V_1,V_2$ in terms of the other basic operators have to be found.
By inspecting the proof of Theorem~\ref{t3} one can see that these can be chosen as
\begin{equation}\label{eq:HartwigDouglasExpressions}
  \begin{aligned}
	U_1 &= BCC^*B^*A^*(A^\dag)^*,\qquad & U_2 &= (B^\dag)^*(C^\dag)^*C^\dag B^\dag A^\dag A,\\
	 V_1 &= B^*A^*ABCC^\dag, & V_2 &= B^\dag A^\dag (A^\dag)^* (B^\dag)^* (C^\dag)^* C^*.
  \end{aligned}
\end{equation}
Then, using the defining equations of $A^{\dag},B^{\dag},C^{\dag},M^{\dag}$, the identity $M^\dag = C^\dag B^\dag A^\dag$ and their adjoint statements as assumptions, the software finds cofactor representations of the polynomial corresponding to
 $PQPQ=PQ$ as well as of the polynomials associated to the four identities in (\ref{eq:HartwigDouglasFull}), where $U_1,U_2,V_1,V_2$ have been replaced by the expressions in (\ref{eq:HartwigDouglasExpressions}).
We note that these cofactor representations only contain polynomials with integer coefficients.
Hence, based on Theorem~\ref{thm:RlinearUnique}, this proves the implication $(i)\Rightarrow(v)$ for any setting in which it can be formulated. 

It is also possible to incorporate properties of operators into this framework that cannot be expressed in terms of identities but only in form of quasi-identities.
In general, quasi-identities are implications where a conjunction of identities implies another identity. 
One example of such a property is $*$-cancellability.
To use these properties to prove a claimed identity, first a suitable polynomial in the ideal representing the assumptions has to be found that corresponds to an operator identity to which such a property is applicable.
Finding such a suitable polynomial is usually a non-trivial task and often has to be done by hand.
For the automated proofs of some of the results presented here, for example, we obtained the required expressions by inspecting the corresponding hand proofs, which were done partly before the automated proofs.
Once such a polynomial has been found, the corresponding quasi-identity can be applied to obtain a new polynomial that corresponds to a shorter identity and that is typically not contained in the ideal that is generated by the polynomials representing the assumptions.
By including this new polynomial into the set of polynomials representing the assumptions, we can enlarge the ideal of all consequences of the assumptions and proceed to prove the ideal membership of the polynomial corresponding to the claimed identity in this larger ideal.

To prove a quasi-identity, the left-hand side of the implication has to be included in the assumptions and the right-hand side becomes the claimed identity.
When translating these operator identities into polynomials it is important to introduce new indeterminates that do not satisfy any additional identities for all universally quantified operators in the quasi-identity.
Then, to prove the quasi-identity, it only remains to prove the ideal membership of the polynomial associated to the claim in the ideal generated by the polynomials representing the assumptions.

Based on the discussion and the observations made above, it is no surprise that the software can also be used to prove all the improved results of Hartwig's triple reverse order law presented in this work.
In the following, we explain how this can be done using the equivalence $(i) \Leftrightarrow (v)$ of Theorem~\ref{t3}.

For the implication $(v)\Rightarrow (i)$, we translate the assumptions $pq=(pq)^2,$ $a^*ap\R\supseteq q^*\R$, $cc^*p^*\R\subseteq q\R$ and their adjoint statements into polynomials.
Note that in order to translate the set inclusions we can use factorizations analogous to \eqref{eq:HartwigDouglasFull}.
In contrast to the original statement of Hartwig, where the MP-invertibility of $ABC$ is already given, we now have to prove that $m = abc$ is MP-invertible and that $m^\dagger = c^\dagger \tilde{b} a^\dagger$.
Hence, the claim is that $\tilde{m} = c^\dagger \tilde{b} a^\dagger$ satisfies the four defining equations of $m^\dagger$. However, trying to show the ideal membership of the corresponding polynomials in the ideal generated by the polynomials representing the assumptions fails.
This is because these polynomials do not contain any information about the right $*$-cancellability of $m$. To use this property, we have to find a polynomial in the ideal generated by the polynomials associated to our assumptions that corresponds to an identity to which this property is applicable.
In the hand proof of this implication, the right $*$-cancellability is applied to $(1- m\tilde{m})mm^* = 0$.
Using the software, we can show that the polynomial corresponding to this identity is indeed contained in the ideal generated by the polynomials representing the assumptions.
Hence, as in the hand proof, we can apply the right $*$-cancellability of $m$ to $(1- m\tilde{m})mm^* = 0$ to obtain $(1- m\tilde{m})m = 0$.
After including the polynomial associated to this new identity in the set of translated assumptions, the software manages to verify the ideal membership of all polynomials corresponding to the claimed identities fully automatically, and thereby, proves the claimed statement.

The proof of $(i) \Rightarrow (v)$ of Theorem~\ref{t3} using the software essentially proceeds along the same lines as the proof discussed above concerning the same implication in Hartwig's theorem.
The only difference is that now also the right $*$-cancellability of $m$ has to be shown.
To this end, we include the identity $zmm^*= 0$ in the assumptions and prove $zm = 0$ with an arbitrary ring element $z$.
When translating these identities into polynomials, $z$ has to be replaced by a new indeterminate that does not satisfy any additional identities.
The software then proves the ideal membership of the polynomial associated to the claimed identity in the ideal generated by the polynomials representing the assumptions fully automatically. 

\begin{remark}
We note that in a similar fashion to the implications discussed above, also all other implications of Theorem~\ref{t3} and all other results presented in this work, including Theorems~\ref{t0}, \ref{t1}, \ref{t3}, \ref{t4}, \ref{tnova}, \ref{t5}, and \ref{t00}, can be proven using the framework.
The relevant computations with noncommutative polynomials were done using \texttt{OperatorGB} and are available at \url{http://gregensburger.com/softw/OperatorGB} along with a file containing all the certificates of ideal membership.
Since all cofactor representations obtained have only polynomials with integer coefficients, by applying Theorem~\ref{thm:RlinearUnique}, the corresponding theorems hold for any setting in which they can be formulated like rings with involution, (rectangular) matrices over such rings, and linear bounded operators between Hilbert spaces.
\end{remark}

\section*{Acknowledgements}

We thank Anja Korporal, Marko Petkovi\'c, and Milan Tasi\'c for discussions related to this paper in the course of the OeAD project SRB 05/2016.  
This work was supported by the Ministry of Science, Technology and Development, Republic of Serbia, and by the Austrian Science Fund (FWF): P~27229, P~31952, and P~32301.

\appendix
\section{Formal summary of algebraic proof framework}
\label{sec:formalsummary}

Now, we give a more formal explanation of the framework developed in \cite{RaabRegensburgerHosseinPoor2019}.
In the following, we fix a set $X$ and a commutative ring $R$ with unit element. We consider the ring $R\langle{X}\rangle$ of noncommutative polynomials with coefficients in $R$ and indeterminates in $X$, where indeterminates do not commute with each other but with coefficients.

Recall that a \emph{quiver} is given by a tuple $(V,E,s,t)$ where $V$ is the set of vertices, $E$ is the set of edges, and $s,t : E\to{V}$ give the \emph{source} $s(e)$ and \emph{target} $t(e)$ of each edge $e \in E$.
We consider \emph{labelled quivers} where edges have labels in $X$, i.e.\ with a function $l:E\to{X}$ giving the labels of edges.
In the following, we fix a labelled quiver $Q=(V,E,X,s,t,l)$ such that edges have unique labels, i.e.\ $l$ is injective.
Based on the labels of edges, it is straightforward to label paths in $Q$ so that multiplication of labels as monomials corresponds to concatenation of paths.
Likewise, the notion of source and target of edges can be naturally extended to paths.

A polynomial in $R\langle{X}\rangle$ such that all its monomials are labels of paths in $Q$ that have the same source and the same target is called \emph{compatible} with $Q$. 
For vertices $v,w \in V$, we collect all compatible polynomials arising from paths with source $v$ and target $w$ in the set $R\langle{X}\rangle_{v,w}$, which is an $R$-module.
Note that for the case $v=w$ there exists an empty path from $v$ to $w$, which has the constant monomial $1$ as its label.
By construction, the polynomials $f_1,\dots,f_8,f$ defined in Section~\ref{sec:introductionframework} are compatible with the following labelled quiver.
\begin{figure}[!ht]
\center
  \begin{tikzpicture}
    \matrix (m) [matrix of math nodes, column sep=2cm]
     {\bullet & \bullet & \bullet\\};
    \path[->] (m-1-1) edge [bend left] node [auto] {$a^{-}$} (m-1-2);
    \path[->] (m-1-2) edge [bend left] node [auto] {$a$} (m-1-1);
    \path[->] (m-1-2) edge [loop above] node [auto] {$i$} (m-1-2);
    \path[->] (m-1-2) edge [bend left] node [auto] {$b^{-}$} (m-1-3);
    \path[->] (m-1-3) edge [bend left] node [auto] {$b$} (m-1-2);
  \end{tikzpicture}
\caption{Labelled quiver for Werner's theorem}
\label{fig:WernerQuiver}
\end{figure}
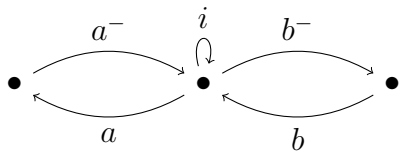

A \emph{representation} of a quiver $(V,E,s,t)$ can be specified by a pair $(\mathcal{M},\varphi)$ such that $\mathcal{M}=(\mathcal{M}_v)_{v\in V}$ is a family of $R$-modules and the map $\varphi$ assigns to each $e\in E$ an $R$-linear map $\varphi(e):\mathcal{M}_{s(e)}\to\mathcal{M}_{t(e)}$.
For example, with $R=\mathbb{Z}$ or $R=\mathbb{C}$, the two diagrams in Section~\ref{sec:introductionframework} specify representations of the labelled quiver shown in Figure~\ref{fig:WernerQuiver}.

Now, for a given representation $(\mathcal{M},\varphi)$ of $Q$, plugging in the $R$-linear maps $\varphi(e)$, $e \in E$, for the indeterminates $l(e)$ of polynomials in $R\langle{X}\rangle$ can be formalized as follows.
For every nonconstant monomial $m \in R\langle{X}\rangle_{v,w}$, there exists a nonempty path $e_n{\dots}e_1$ in $Q$ with source $v$, target $w$, and label $m$, which allows to define the $R$-linear map $\varphi_{v,w}(m):=\varphi(e_n){\cdot}{\dots}{\cdot}\varphi(e_1)$ from $\mathcal{M}_v$ to $\mathcal{M}_w$.
Note that, by definition of $\varphi$, the composition of the maps $\varphi(e_i)$ exists.
Similarly, if $v=w$, we define $\varphi_{v,v}(1) := \id_{\mathcal{M}_v}$.
The map $\varphi_{v,w}$ extends $R$-linearly to all $f\in R\langle{X}\rangle_{v,w}$ and we call the $R$-linear map $\varphi_{v,w}(f)$ a \emph{realization} of $f$ w.r.t.\ the representation $(\mathcal{M},\varphi)$ of $Q$.

Altogether, one can prove the following main theorem about the framework. The formulation stated here is a consequence of Theorem~32 and 15 in \cite{RaabRegensburgerHosseinPoor2019}.

\begin{theorem}\label{thm:RlinearUnique}
 Let $R$ be a commutative ring with unit element, let $F \subseteq R\langle{X}\rangle$ be a set of polynomials without a constant term, and let $f \in (F)$.
 Then, for all labelled quivers $Q$ with unique labels in $X$ such that $f$ and all polynomials in $F$ are compatible with $Q$ and for all representations $(\mathcal{M},\varphi)$ of $Q$ such that the realizations of the polynomials in $F$ w.r.t.\ $(\mathcal{M},\varphi)$ are zero, we have that also the realization of $f$ w.r.t.\ $(\mathcal{M},\varphi)$ is zero.
\end{theorem}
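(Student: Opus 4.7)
The plan is to reduce the theorem to a bookkeeping argument on monomials (paths) in $Q$, starting from an explicit cofactor representation of $f$. Since $f$ lies in the two-sided ideal generated by $F$ in $R\langle X\rangle$, there exist finitely many polynomials $p_i,q_i\in R\langle X\rangle$ and $g_i\in F$ with $f=\sum_i p_i g_i q_i$. The central difficulty is that the individual terms $p_i g_i q_i$ need not be compatible with $Q$ even when $f$ and every $g_i$ are, so the partially defined realization map cannot be applied to them directly; the main work is to first make the cofactor representation compatible and only then to evaluate it.

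The first step is to decompose $R\langle X\rangle$ as an $R$-module into the direct sum of the compatible components $R\langle X\rangle_{v',w'}$, one for each ordered pair of vertices of $Q$, together with the span of all monomials whose consecutive letters fail to form a path. Since $Q$ has unique edge labels, each monomial belongs to exactly one of these summands, so this decomposition is well-defined and the projection $\pi_{v,w}$ onto any one summand is $R$-linear. Let $(v,w)$ be the unique pair with $f\in R\langle X\rangle_{v,w}$ and project the cofactor representation: expanding each $p_i g_i q_i$ monomial by monomial, a contribution $u\cdot m\cdot u'$ with $m$ a monomial of $g_i$ and $u,u'$ monomials of $p_i, q_i$ survives only if $u'$ traces a path from $v$ to some vertex $a_i$, $m$ traces a path from $a_i$ to some vertex $b_i$, and $u$ traces a path from $b_i$ to $w$. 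Since $g_i$ is itself compatible, say $g_i\in R\langle X\rangle_{a_i,b_i}$, the vertices $a_i,b_i$ are forced by $g_i$ alone, and regrouping yields $f=\sum_{i,j} p_{i,j} g_i q_{i,j}$ with $p_{i,j}\in R\langle X\rangle_{b_i,w}$ and $q_{i,j}\in R\langle X\rangle_{v,a_i}$, so every summand is now fully compatible with $Q$.

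The final step is to apply the realization. On compatible polynomials it is $R$-linear and satisfies $\varphi_{v,w}(h g \ell)=\varphi_{b_i,w}(h)\circ\varphi_{a_i,b_i}(g)\circ\varphi_{v,a_i}(\ell)$ whenever the source/target labels match, which follows directly from its definition via path concatenation combined with $R$-linearity. Applying this term by term to the compatible cofactor representation and using the hypothesis that $\varphi_{a_i,b_i}(g_i)=0$ for every $g_i\in F$ makes each summand vanish, and hence $\varphi_{v,w}(f)=0$ as required.

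I expect the principal technical obstacle to be the projection-and-regrouping step in the middle paragraph: one has to verify that uniqueness of edge labels makes the path classes and the non-path class of monomials linearly independent, so that $\pi_{v,w}$ is well-defined, and that after projection the cofactor structure is genuinely preserved in the sense that the surviving pieces of $p_i$ and $q_i$ still wrap $g_i$ with matching source and target. Once this compatible cofactor representation is in hand, the rest of the argument is a formal consequence of multiplicativity of $\varphi$ on compatible factors, and the edge cases $f=0$ or $F=\emptyset$ are trivial.
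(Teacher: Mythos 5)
Your proof is correct, and it is worth noting that the paper itself does not prove this theorem: it is imported from Theorems~32 and 15 of \cite{RaabRegensburgerHosseinPoor2019}, and your argument reconstructs exactly the two-step strategy underlying those results, namely first upgrading an arbitrary cofactor representation $f=\sum_i p_ig_iq_i$ to one in which every factor is compatible with $Q$, and then evaluating term by term using multiplicativity of the realization on compatible factors with matching endpoints. Two points should be made explicit. First, your projection step is precisely where the hypothesis that the elements of $F$ have no constant term enters: since every monomial $m$ of $g_i$ is non-constant and edge labels are unique, $m$ is the label of a unique non-empty path, so the splitting vertices $a_i,b_i$ of a surviving word $umu'$ are forced by $g_i$ alone and one gets $\pi_{v,w}(p_ig_iq_i)=\pi_{b_i,w}(p_i)\,g_i\,\pi_{v,a_i}(q_i)$; if some $g_i$ had a constant term, the contribution $u\cdot 1\cdot u'=uu'$ could split at any intermediate vertex and this identity would fail (this is why the constant-term case requires the refined treatment of \cite{ChenavierHofstadlerRaabRegensburger2020}). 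Second, the claimed direct-sum decomposition of $R\langle X\rangle$ is not literally correct, because the constant monomial $1$ is the label of the empty path at every vertex and hence lies in $R\langle X\rangle_{v,v}$ for all $v$; this is harmless since your argument only uses the individual linear maps $\pi_{v,w}$ defined on the monomial basis and since $f$ and all $g_i$ have no constant term, but the statement should be rephrased (e.g., decompose only the span of non-constant monomials). With these clarifications the proof is complete and self-contained.
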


 All notions and results of this section naturally generalize to $R$-linear categories by considering objects and morphisms in such a category instead of $R$-modules and $R$-linear maps, respectively.
 For more details, see Section~5.2 in \cite{RaabRegensburgerHosseinPoor2019}.
Based on a refined version of the framework using rewriting, it is possible to obtain a similar theorem where polynomials in $F$ are allowed to have a constant term, see Theorem~32 in \cite{ChenavierHofstadlerRaabRegensburger2020}.

Altogether, based on the theorem above, we obtain a rigorous proof of the following statement for matrices discussed in Section~\ref{sec:introductionframework}.
\begin{lemma}
\label{lem:Werner1}
 Let $A,B$ be matrices with entries in a commutative ring $R$ with unit element and let $A^-,B^-$ be inner inverses of $A$ resp.\ $B$. If $BB^-(I-A^-A)=I-A^-A$ holds, then $B^-A^-$ is an inner inverse of $AB$.
\end{lemma}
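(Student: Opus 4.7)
The plan is to apply Theorem~\ref{thm:RlinearUnique} directly, using the cofactor representation already exhibited in~\eqref{eq:WernerCofactor1}. First I would fix matrix shapes so the statement makes sense: if $A \in R^{m\times n}$ and $B \in R^{n\times k}$, then the inner-inverse hypotheses force $A^- \in R^{n\times m}$ and $B^- \in R^{k\times n}$, while the $I$ appearing in $BB^-(I-A^-A)=I-A^-A$ must be $I_n$. These shapes are exactly those encoded by the labelled quiver of Figure~\ref{fig:WernerQuiver} once its three vertices are identified with $R^m$, $R^n$, $R^k$.

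I would then consider the representation $(\mathcal{M},\varphi)$ of this quiver that assigns each vertex the corresponding free $R$-module and each edge the named matrix. By construction, the polynomials $f_1,\dots,f_8$ and $f$ from Section~\ref{sec:introductionframework} are compatible with the quiver; moreover the realizations of $f_1$ and $f_2$ vanish by~\eqref{eq:WernerInnerInverses}, the realization of $f_3$ vanishes by the hypothesis~\eqref{eq:WernerAssumption}, and the realizations of $f_4,\dots,f_8$ vanish because $I_n$ acts as an identity in the appropriate compositions. Checking the cofactor identity~\eqref{eq:WernerCofactor1} in the free ring $\mathbb{Z}\langle a, a^-, b, b^-, i\rangle$ is then a purely formal term-by-term expansion, and since its coefficients are integers the same equality holds in $R\langle X\rangle$ for any commutative ring $R$. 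Hence $f$ lies in the ideal generated by $f_1,\dots,f_8$, and Theorem~\ref{thm:RlinearUnique} yields that the realization of $f$ vanishes, i.e.\ $ABB^-A^-AB = AB$, which is precisely the claim that $B^-A^-$ is an inner inverse of $AB$.

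The only genuinely nontrivial step is the discovery of the cofactor representation~\eqref{eq:WernerCofactor1}, which the paper has already supplied (and which in practice is produced by a noncommutative Gr\"obner-basis computation with \texttt{OperatorGB}). Everything else is routine bookkeeping about dimensions and a single invocation of the main theorem of the framework, so no substantial obstacle remains once the cofactor representation is in hand.
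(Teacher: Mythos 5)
Your proposal is correct and follows essentially the same route as the paper's proof: verify the cofactor representation \eqref{eq:WernerCofactor1} to establish $f\in(F)$, check compatibility with the quiver of Figure~\ref{fig:WernerQuiver}, fix the representation sending the vertices to $R^m$, $R^n$, $R^k$ and the edges to the given matrices, and invoke Theorem~\ref{thm:RlinearUnique}. The extra bookkeeping you include about matrix shapes and the vanishing of the realizations of $f_4,\dots,f_8$ is implicit in the paper's argument and does not change the approach.
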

\begin{proof}
In the polynomial ring $R\langle{a,a^-,b,b^-,i}\rangle$, the cofactor representation \eqref{eq:WernerCofactor1} shows that the polynomial $f$ given by \eqref{eq:WernerClaimPoly} lies in the ideal $(F) \subseteq R\langle{a,a^-,b,b^-,i}\rangle$, where $F:=\{f_1,\dots,f_8\}$. The generators of the ideal as well as the polynomial $f$ are compatible with the labelled quiver shown in Figure~\ref{fig:WernerQuiver}. We fix the following representation of this quiver.
\begin{center}
  \begin{tikzpicture}
    \matrix (m) [matrix of math nodes, column sep=2cm]
     {R^m & R^n & R^k\\};
    \path[->] (m-1-1) edge [bend left] node [auto] {$A^{-}$} (m-1-2);
    \path[->] (m-1-2) edge [bend left] node [auto] {$A$} (m-1-1);
    \path[->] (m-1-2) edge [loop above] node [auto] {$I$} (m-1-2);
    \path[->] (m-1-2) edge [bend left] node [auto] {$B^{-}$} (m-1-3);
    \path[->] (m-1-3) edge [bend left] node [auto] {$B$} (m-1-2);
  \end{tikzpicture}
\end{center}
If $BB^-(I-A^-A)=I-A^-A$, then the realizations of all elements of $F$ are zero by assumption. Then, the realization of $f$ is zero by Theorem~\ref{thm:RlinearUnique}, i.e.
\[
 ABB^-A^-AB-AB=0.\qedhere
\]
\end{proof}

Note that the proof of this lemma relies on the purely algebraic fact that the polynomial $f$ representing the claim lies in the ideal $(F)$ representing the assumptions.
By changing the representation of the quiver, Theorem~\ref{thm:RlinearUnique} gives rigorous proofs also of analogous lemmas for bounded linear operators between Hilbert spaces, for homomorphisms of $R$-modules, and for ring elements.
\end{document}